\documentclass[12pt]{amsproc}

\usepackage{fullpage}
\usepackage{amsmath}
\usepackage{amsfonts}
\usepackage{amssymb}
\usepackage{amsthm}
\usepackage[utf8]{inputenc}
\usepackage{breqn}
\usepackage{afterpage}
\usepackage{longtable}
\usepackage{indentfirst}
\usepackage{caption}
\usepackage{subcaption}
\usepackage{comment} 
\usepackage{enumitem}

\usepackage{graphicx}
\graphicspath{ {./images/} }
\newtheorem{theorem}{Theorem}[section]

\newtheorem{proposition}[theorem]{Proposition}

\theoremstyle{definition}
\newtheorem{definition}[theorem]{Definition}

\newtheorem{theorem-definition}[theorem]{Theorem-Definition}

\theoremstyle{remark}

\numberwithin{equation}{section}

\usepackage{color}
\usepackage{xcolor}

\begin{document}

\title{ $(m, \psi, \delta)-$capacity and its properties}

\begin{author}[K.~Kuldoshev]{Kobiljon Kuldoshev }
    \address{National University of Uzbekistan,  Tashkent, Uzbekistan}
\email{qobiljonmath@gmail.com}
\end{author}

\date{ }
\maketitle 
 { \centering\small \emph{Dedicated to the memory of Professor Azimbay Sadullaev}\par}
 
\begin{abstract}

 In this paper, we introduce the concept of the generalized $(m, \psi, \delta)-$capacity in the complex space $\mathbb{C}^n$, within the class of $m-$subharmonic functions. We give a relation between $(m, \psi, \delta)-$capacity and $(m, \psi, \delta)-$subharmonic measure. Moreover, we prove that the capacity vanishes on $m-$polar sets and vice versa. 
\end{abstract}

\section{Introduction}\label{Intro}
Plurisubharmonic functions (\textit{psh}) and the capacity of the condenser \((K, D)\), defined in terms of them, are fundamental concepts in pluripotential theory. They have attracted significant attention from many researchers due to their theoretical importance and practical applications
(see, for example, \cite{AS}, \cite{BETA}, \cite{KRSS}, \cite{ASKR}, \cite{NAKR}, \cite{KR}). Since the beginning of the 21st century, the class of $m-$subharmonic functions, which properly contains the class of plurisubharmonic functions, has been actively studied (see, for example, \cite{DSKS}, \cite{ZB}). In this class, the concept of condenser capacity was introduced by A. Sadullaev and B. Abdullaev and many important results have been obtained (see \cite{ASBA}).

 Let  $D \subset \mathbb{C}^n$  be an $m-$regular domain (see Section \ref{m-subharmonic} for definition),  $K \subset D$  a fixed compact set, $\psi(z)$ a bounded function on $K $ and $ \delta \in \mathbb{R}$ such that $\delta > \sup\limits_{z \in K} \psi(z).$ In this paper, we introduce the generalized concept of $(m, \psi, \delta)-$capacity, as an extension of the $m-$capacity  which was introduced in \cite{KK}, where $1\le m\le n$. 
 By $sh_m(D)$ we denote the class of $m-$subharmonic functions on $D$.

The function
\[
\omega_m^*(z, K, D, \psi, \delta) = \overline{\lim_{w \to z}} \, \sup \{u(z) \in sh_m(D): u|_K \le \psi|_K,\, u|_D < \delta\}
\]
is called the $(m, \psi, \delta)-$subharmonic measure of the compact set $K$ with respect to $D$. A point $z^0 \in K$ is said to be $(m, \psi, \delta)-$\textit{regular} if
$\omega_m^*(z^0, K, D, \psi, \delta) = \psi(z^0).$
If all the points of a compact set $K$ are  $(m, \psi, \delta)-$regular, then the compact set $K$ is called a $(m, \psi, \delta)-$\textit{regular} compact (see \cite{KKNN}).
The following quantity
\[
C_m(K, D, \psi, \delta) = \inf \left\{ \int_D (dd^c u)^{n - m + 1} \wedge \beta^{m - 1} :
u \in sh_m(D) \cap C(D),\ u|_K \le \psi|_K,\,\ \mathop{\underline{\lim}}_{z \to \partial D} u(z) \ge \delta \right\}
\]
is called the \textit{$(m, \psi, \delta)-$capacity}   of $K$ with respect to $D$, where $d = \partial + \bar{\partial}$, $d^c = \frac{\partial - \bar{\partial}}{4i}$, and $\beta = dd^c |z|^2 = \frac{i}{2} \sum_{i=1}^n dz_i \wedge d\bar{z}_i$ is the standard canonical $(1,1)$ form in $\mathbb{C}^n$.
The $(m, \psi, \delta)-$external capacity, denoted by $C_{m}^{*}(E, D, \psi, \delta)$ for any set $E \subset D$, is defined in the standard way:

\[
C_{m}^{*}(E, D, \psi, \delta) 
= \inf \{ C_{m}(U, D, \psi, \delta) : U \supset E,\; D \supset U-\text{open} \}
\]
where  $
C_m(U, D, \psi, \delta) = \sup \{ C_m(K, D, \psi, \delta) : U \supset K-\text{compact}\}.
$

 Let us now state our main theorem.

\begin{theorem}\label{zero=polar}
Let $ \omega_m^*(z, K, D, \psi, \delta)$ and  $C_m(K, D, \psi, \delta)$ be defined as above. Assume that the function  $\psi(z)$ is continuous on the compact set $K$. Then,  we have
\begin{enumerate}[label=\textnormal{(\alph*)}]
\item \label{A}  if $K$ is  an $(m, \psi, \delta)-$regular compact set,  then the $(m, \psi, \delta)-$measure is maximal in $D \setminus K$, i.e.,
$$(dd^c \, \omega_m^*(z, K, D, \psi, \delta))^{n - m + 1} \wedge \beta^{m - 1} = 0;$$

\item \label{B} if $K$ is an $(m, \psi, \delta)-$regular compact set, then
\[
C_m(K, D, \psi, \delta) = \int_K (dd^c\, \omega_m^*(z, K, D, \psi, \delta))^{n - m + 1} \wedge \beta^{m - 1};
\]

\item \label{C} $ C_{m}^{*}(E, D, \psi, \delta) = 0$ if and only if $E$ is $m-$polar.
\end{enumerate}
\end{theorem}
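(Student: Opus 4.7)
For part \ref{A}, I will use the classical balayage argument adapted to the complex Hessian operator $(dd^c\cdot)^{n-m+1}\wedge\beta^{m-1}$. As the upper semicontinuous regularization of the supremum of a family of $m$-subharmonic functions, $\omega_m^*\in sh_m(D)$. To show the Hessian measure vanishes on $D\setminus K$, I fix an arbitrary ball $B\Subset D\setminus K$, solve the Dirichlet problem for the homogeneous Hessian equation on $B$ with boundary data $\omega_m^*|_{\partial B}$, and let $v$ be the solution. The glued function that equals $v$ on $B$ and $\omega_m^*$ on $D\setminus\overline{B}$ is $m$-subharmonic in $D$, still bounded above by $\delta$, and still $\le\psi$ on $K$; hence it belongs to the defining family of $\omega_m^*$, forcing $v\le\omega_m^*$ on $B$. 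The reverse inequality $v\ge\omega_m^*$ on $B$ holds by the Perron construction, so $v=\omega_m^*$ there and the Hessian measure of $\omega_m^*$ vanishes in $B$.

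For part \ref{B}, I will prove both inequalities. Under the continuity of $\psi$ and the $(m,\psi,\delta)$-regularity of $K$, $\omega_m^*$ is continuous on $\overline{D}$, equals $\psi$ on $K$, and tends to $\delta$ at $\partial D$, so $\omega_m^*$ itself is admissible for the capacity; combined with part \ref{A} this gives $C_m(K,D,\psi,\delta)\le\int_K(dd^c\omega_m^*)^{n-m+1}\wedge\beta^{m-1}$. For the reverse inequality, for any admissible $u\in sh_m(D)\cap C(D)$ I apply the comparison principle for the Hessian operator: the boundary condition $\liminf_{\partial D}(u-\omega_m^*)\ge 0$ together with $u\le\psi=\omega_m^*$ on $K$ yields $\int_D(dd^cu)^{n-m+1}\wedge\beta^{m-1}\ge\int_D(dd^c\omega_m^*)^{n-m+1}\wedge\beta^{m-1}$, and by part \ref{A} the right-hand side equals $\int_K(dd^c\omega_m^*)^{n-m+1}\wedge\beta^{m-1}$. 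A standard $\varepsilon$-perturbation of $\omega_m^*$ handles the non-strict inequality on $K$.

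For part \ref{C}, the implication ``$m$-polar $\Rightarrow C_m^*(E,D,\psi,\delta)=0$'' is the easier one: given a negative $\varphi\in sh_m(D)$ with $E\subset\{\varphi=-\infty\}$, suitable truncations of $\varphi$ shifted and rescaled to lie between $\psi$ and $\delta$ provide admissible capacity competitors for open neighbourhoods of $E\cap\{\varphi<-N\}$ whose Hessian integrals tend to zero as $N\to\infty$. The converse is the main obstacle, and my plan follows the classical Josefson--Bedford--Taylor template: from $C_m^*(E,D,\psi,\delta)=0$ I extract open sets $U_k\supset E$ with $C_m(U_k,D,\psi,\delta)\le 2^{-k}$, pick continuous near-minimizers $u_k$ for the $U_k$-capacity satisfying $u_k\le\psi$ near $E$, and form a weighted sum $v:=\sum_k c_k(u_k-\delta)$. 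The crucial analytic step, which I expect to be the hardest, is a quantitative estimate of $\|u_k-\delta\|_{L^1_{\mathrm{loc}}(D)}$ in terms of $C_m(U_k,D,\psi,\delta)$ that ensures the series converges in $L^1_{\mathrm{loc}}(D)$ off $E$ and defines a negative function $v\in sh_m(D)$, while each summand is bounded above by a negative constant on $E$, so that $v=-\infty$ on $E$ and $E$ is exhibited as $m$-polar.
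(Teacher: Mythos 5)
Your plans for parts \ref{A} and \ref{B} are essentially the paper's own arguments: balayage on a ball $B\Subset D\setminus K$ with the glued competitor (relying on the continuity of $\omega_m^*$ coming from $(m,\psi,\delta)$-regularity, Theorem \ref{con}) gives \ref{A}, and admissibility of $\omega_m^*$ plus the comparison principle \eqref{ICP} applied on the set where $u$ lies below a slightly shrunk copy of $\omega_m^*$ gives \ref{B}; your ``$\varepsilon$-perturbation'' is exactly the factor $\bigl(1-\tfrac{2\varepsilon}{\delta-\max_K\psi}\bigr)$ device in Proposition \ref{reg-com cap}.

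Part \ref{C} is where your proposal has genuine gaps. In the direction ``$m$-polar $\Rightarrow$ capacity zero'' you assert that rescaled truncations of the potential $\varphi$ give competitors ``whose Hessian integrals tend to zero as $N\to\infty$''; that is precisely the nontrivial point, and it does not follow from the construction: for truncations of the type $\max(\varphi/N,-1)$ a Chern--Levine--Nirenberg bound only yields Hessian mass bounded uniformly in $N$, not smallness, and you cannot replace it by $N^{-(n-m+1)}\int(dd^c\varphi)^{n-m+1}\wedge\beta^{m-1}$ because the Hessian of the unbounded function $\varphi$ is not defined; moreover such truncations are neither continuous nor do they satisfy $\mathop{\underline{\lim}}_{z\to\partial D}u\ge\delta$ without further modification. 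For the converse, your Josefson--Bedford--Taylor scheme hinges on a quantitative bound of $\|u_k-\delta\|_{L^1_{loc}}$ by $C_m(U_k,D,\psi,\delta)$, which you explicitly leave unproven; that estimate is the entire content of the implication and is the deep part of the theory (quasicontinuity and convergence theorems, established in the unweighted setting in \cite{ASBA}), so as it stands part \ref{C} is not proved.

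The paper avoids rebuilding this machinery altogether: using Proposition 1.3 of \cite{KKNN}, which sandwiches $\omega_m^*(z,E,D,\psi,\delta)$ between affine images of the unweighted measure $\omega_m^*(z,E,D)$, together with the comparison principle, Proposition \ref{m-th}, Proposition \ref{reg-com cap} and Proposition \ref{ext-cap-com-eq}, it proves the two-sided bound $\bigl(\delta-\max_E\psi\bigr)^{n-m+1}C_m^*(E,D)\le C_m^*(E,D,\psi,\delta)\le\bigl(\delta-\min_E\psi\bigr)^{n-m+1}C_m^*(E,D)$ on regular compacts, and then quotes the known equivalence ``$C_m^*(E,D)=0$ if and only if $E$ is $m$-polar'' from \cite{ASBA}. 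If you want to keep your outline, the cleanest repair is to prove such a comparison with the unweighted capacity and cite the unweighted result, rather than re-deriving the Bedford--Taylor-type theorems in the weighted setting.
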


This paper is organized as follows.   Section \ref{m-subharmonic} is devoted to $m-$subharmonic functions and $m-$capacity.
In Section \ref{Weghted}, we study the weighted  $m-$subharmonic measure and prove the  part \ref{A} of Theorem \ref{zero=polar}.
  Finally, in Section \ref{W cap}, we define $(m, \psi, \delta)-$capacity  and establish some of its properties. Furthermore, we complete the proof of Theorem \ref{zero=polar}.

\section{$m-$subharmonic functions and $m-$capacity} \label{m-subharmonic}
In this section,  we recall the necessary definitions and concepts related to $m-$subharmonic functions from \cite{ASBA}, which will be used throughout this paper. 

The $m-$subharmonic functions in a domain $D \subset \mathbb{C}^n$ are defined using the Hessian operators
\[
(dd^c u)^k \wedge \beta^{n-k}, \quad 1 \le k \le n.
\]
For $k = 1$, the Hessian operator coincides with the Laplace operator and for $k = n$, it coincides with the Monge–Ampère operator.

\begin{definition}\label{DmC}
A twice differentiable function $u(z) \in C^2(D)$ is called \textit{$m-$subharmonic} if the following conditions
\[
(dd^c u)^k \wedge \beta^{n-k} \geq 0, \quad \text{ for all }\, k = 1, 2, \dots, n - m + 1
\]
hold at each point $z^0 \in D$.
\end{definition}
Z. Blocki proved that for all twice differentiable $m-$subharmonic functions $u,\ v_1,\ v_2,$ $\dots,\ v_{n-m}$, the following inequality holds
\begin{equation} \label{BI}
  dd^c u \wedge dd^c v_1 \wedge dd^c v_2 \wedge \dots \wedge dd^c v_{n-m} \wedge \beta^{m-1} \geq 0.  
\end{equation}
On the other hand, if a twice differentiable function $u$ satisfies \eqref{BI} for all twice differentiable $m-$subharmonic functions $v_1,\ v_2,\ \dots,\ v_{n-m}$, then $u$ is necessarily $m-$subharmonic (see \cite{ZB}). Using this, $m-$subharmonic functions are defined in the class of locally integrable functions (see \cite{ASBA}).
\begin{definition}\label{DmL}
A function $u \in L^1_{\text{loc}}(D)$ is called \textit{$m-$subharmonic} in the domain $D \subset \mathbb{C}^n$, if it is upper semicontinuous and for any twice differentiable $m-$subharmonic functions $v_1, v_2, \dots, v_{n-m}$, the current
\[
dd^c u \wedge dd^c v_1 \wedge dd^c v_2 \wedge \dots \wedge dd^c v_{n-m} \wedge \beta^{m-1}
\]
is positive, i.e.,
for any positive test function $\omega$ in $D$, we have
\[
\int u \wedge dd^c v_1 \wedge dd^c v_2 \wedge \dots \wedge dd^c v_{n-m} \wedge \beta^{m-1} \wedge dd^c \omega \geq 0.\]
\end{definition}
Recall that the class of $m-$subharmonic functions is denoted by $sh_m(D)$. It is clear that
\begin{equation} \label{Sub}
psh = sh_1 \subset sh_2 \subset sh_m \subset \dots \subset sh_n = sh.
\end{equation}
\begin{definition}\label{MP}
A set $E \subset D$ is called \textit{$m-$polar} in $D \subset \mathbb{C}^n$ if there exists a function $u \in sh_m(D)$, $u \not\equiv -\infty$, such that $u|_E = -\infty$.
\end{definition}
It follows from \eqref{Sub} that an $m-$polar set is polar. Consequently, for any $m-$polar set $E \subset D$, the Hausdorff measure $H_{2n-2+\varepsilon}(E) = 0$  for any $ \varepsilon>0.$ 
\begin{definition}\label{reg-dom}
A domain $D \subset \mathbb{C}^n$ is called \textit{$m-$regular} if there exists an $m-$subharmonic function $\rho \in sh_m(D)$ such that $\rho|_D < 0$ and 
$\lim\limits_{z \to \partial D} \rho(z) = 0$.
\end{definition}

Let $D \subset \mathbb{C}^n$ be an $m-$regular domain and let $K \subset D$ be a compact set.

\begin{definition}\label{m-cap}
 The following quantity
\[
C_m(K, D) = \inf \left\{ \int_D (dd^c u)^{n - m + 1} \wedge \beta^{m - 1} : u \in sh_m(D) \cap C(D),\, u|_K \le -1,\, \mathop{\underline{\lim}}_{z \to \partial D} u(z) \ge 0 \right\}
\]
is called the\textit{ $m-$capacity } of $K$ with respect to $D$.
\end{definition}

Note that the $m-$capacity of the condenser
$C_m(U, D)$
for an open set $U \subset D$ and the external capacity
$C_m^*(E, D)$
for any set $E \subset D$ are defined in the standard way (see \cite{AS}). The external capacity
$C_m^*(E, D)$
is well studied.
$C_m^*(E, D) = 0$
if and only if $E$ is an $m-$polar set. Moreover, their monotonicity, countable subadditivity and several other properties can be found in the work \cite{ASBA}. 

The comparison principle, which expresses an essential inequality between $m-$subharmonic functions and the integrals of their Hessians, is used extensively throughout this paper. For this reason, we recall its statement below. 

Let $u, v \in sh_m(D) \cap L_{loc}^\infty(D)$.  If
$F = \{ z \in D : u(z) < v(z) \} \subset\subset D,$
then the following inequalities hold
\begin{equation}\label{ICP}
\int_F (dd^c u)^k \wedge \beta^{n-k} \ge \int_F (dd^c v)^k \wedge \beta^{n-k}, \quad 1 \le k \le n - m + 1.
\end{equation}
    
\section{Weighted $m-$subharmonic measure}\label{Weghted}
In this section, we review some important notions and properties of the weighted $m-$subharmonic measure, then prove part \ref{A} of Theorem \ref{zero=polar}.  Let $D \subset \mathbb{C}^n$ be an $m-$regular domain, $E \subset D$ be any fixed set and $\psi(z)$ be a bounded function in $E$. We denote by
$\mathcal{U}(E, D, \psi, \delta)$
the class of all functions $u(z) \in sh_m(D)$ such that
\[
u|_E \le \psi|_E \quad \text{and} \quad u|_D < \delta,
\]
where $\delta \in \mathbb{R}$. We define the function
\[
\omega_m(z, E, D, \psi, \delta) = \sup \{ u(z) : u(z) \in \mathcal{U}(E, D, \psi, \delta) \}.
\]
Recall that the function
\[
\omega_m^*(z, E, D, \psi, \delta) = \overline{\lim_{w \to z}} \, \omega_m(w, E, D, \psi, \delta)
\]
is called the \textit{$(m, \psi, \delta)-$subharmonic measure} (\textit{$\mathcal{P}_{(m, \psi, \delta)}-$measure}) of the set $E$ with respect to $D$.

Note that $\omega^*(z, E, D, -1, 0), \quad \psi \equiv -1, \quad \delta = 0,$
coincides with the $m-$subharmonic measure of the potential theory in the class of functions $u(z) \in sh_m(D)$, i.e.,
\[
\omega_m^*(z, E, D, -1, 0) = \omega_m^*(z, E, D).
\]
By definition, the function $\omega_m^*(z, E, D, \psi, \delta)$ is $m-$subharmonic in $D$ and the inequality
\[
\omega_m^*(z, E, D, \psi, \delta) \le \delta
\]
holds for all $z \in D$. Moreover,   if $E \Subset D$, then $\displaystyle \lim_{z \to \partial D} \omega_m^*(z, E, D, \psi, \delta) = \delta$. Furthermore,
The weighted $m-$subharmonic measure satisfies the properties of the unweighted $m-$subharmonic measure (see \cite{KKNN}, \cite{KKKR}).

Let the function $\psi(z)$ be extended to the domain $D$, as a function from the class $\mathcal{U}(E, D, \psi, \delta)$, i.e., if there is a function $\tilde{\psi} \in sh_m(D)$ such that
\[
 \tilde{\psi}|_E = \psi|_E \quad \text{and} \quad \tilde{\psi}|_D < \delta
\]
then it is obvious that
\[
\omega_m(z, E, D, \psi, \delta) \ge \tilde{\psi}(z), \quad \forall z \in D
\]
and
\begin{equation}\label{e}
\omega_m(z, E, D, \psi, \delta) = \psi(z), \quad \forall z \in E.
\end{equation}
However, in general, equality \eqref{e} does not hold (see \cite{KKNN}). 
In this paper, we consider the special case where
$\delta > \sup\limits_{z \in E} \psi(z)$
and condition \eqref{e} is satisfied in the definition of
$\omega_m(z, E, D, \psi, \delta).$
Below, we present an important theorem concerning the continuity of the $(m, \psi, \delta)-$subharmonic measure, which was proved in our work \cite{KKNN}.
\begin{theorem}\label{con}
Let $K$ be an $(m, \psi, \delta)-$ regular compact set and let $\psi(z)$ be continuous on the compact set $K$. Then  $\omega_m(z, K, D, \psi, \delta) \in C(\bar{D})$.

\end{theorem}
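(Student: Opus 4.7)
The plan is to establish continuity of $\omega := \omega_m(\cdot,K,D,\psi,\delta)$ on $\bar D$, after extending it to $\partial D$ by the constant value $\delta$, by verifying upper and lower semicontinuity separately. The structure will mimic a Perron--Walsh argument in which the ``boundary'' of the Dirichlet problem has two components, $\partial D$ and $K$, with regularity of both driving continuity.

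The easy direction is upper semicontinuity together with continuity along $K$ from above. The u.s.c.\ regularization $\omega^*$ is automatically upper semicontinuous on $D$ and bounded by $\delta$, and the limit $\omega^*(z)\to\delta$ as $z\to\partial D$ is already recorded in the excerpt. The assumption $\delta>\sup_K\psi$ activates hypothesis \eqref{e}, giving $\omega\equiv\psi$ on $K$; combined with $(m,\psi,\delta)$-regularity ($\omega^*\equiv\psi$ on $K$), this forces $\omega=\omega^*=\psi$ on all of $K$. Upper semicontinuity on $\bar D$ then follows.

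The substantive content is lower semicontinuity: at each $z_0\in\bar D$ and each $\varepsilon>0$, I would produce $v_\varepsilon\in\mathcal U(K,D,\psi,\delta)\cap C(\bar D)$ with $v_\varepsilon(z_0)>\omega(z_0)-\varepsilon$, so that $\omega\ge v_\varepsilon$ pointwise and continuity of $v_\varepsilon$ at $z_0$ give $\liminf_{z\to z_0}\omega(z)\ge v_\varepsilon(z_0)>\omega(z_0)-\varepsilon$. For $z_0\in\partial D$ the barrier is $v_\varepsilon(z)=(\delta-\varepsilon)+M\rho(z)$, where $\rho$ is the $m$-subharmonic defining function of the $m$-regular domain $D$ and $M>0$ is chosen, using $\rho|_K\le -c<0$, so that $v_\varepsilon\le\inf_K\psi$ on $K$ and $v_\varepsilon<\delta$ on $D$. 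For $z_0\in K$ the barrier must exploit both continuity of $\psi$ at $z_0$ (giving $\psi\ge\psi(z_0)-\varepsilon/3$ on $K\cap\bar B(z_0,r)$) and the $(m,\psi,\delta)$-regularity of $K$; I would combine a local continuous $m$-subharmonic piece of the form $\psi(z_0)-\varepsilon/2+A|z-z_0|^2+B\rho(z)$ with a global continuous subsolution obtained by a Choquet-style countable selection from the regularity envelope, glued by the $\max$ operation, which preserves $m$-subharmonicity.

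The principal obstacle is precisely this construction at $z_0\in K$. The local candidate alone fails: requiring $v_\varepsilon(z_0)>\psi(z_0)-\varepsilon$ forces $B$ small (since $B\rho(z_0)<0$), while requiring $v_\varepsilon\le\inf_K\psi$ on $K\setminus B(z_0,r)$ forces $B$ proportional to $\text{osc}_K\psi$. Resolving this conflict genuinely requires exploiting $(m,\psi,\delta)$-regularity at points of $K$ away from $z_0$, which supplies, for each $\eta>0$, a function in $\mathcal U$ within $\eta$ of $\psi$ at any chosen sample point. I expect the crucial step to be a Choquet--Edwards-type selection of a countable increasing sequence of continuous $m$-subharmonic subsolutions converging to $\omega^*$ outside an $m$-polar set, after which uniform continuity of $\psi$ on $K$ together with Dini's theorem on the compact set $\bar D$ should upgrade pointwise to uniform convergence, once continuity has been verified separately on the disjoint closed sets $K$ and $\partial D$.
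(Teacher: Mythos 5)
You should first be aware that the paper contains no proof of Theorem~\ref{con}: it is imported from \cite{KKNN} ("which was proved in our work \cite{KKNN}"), so there is nothing internal to compare your argument against and I can only judge it on its own terms. Your skeleton is the classical one for continuity of $\mathcal{P}$-measures and is basically right: upper semicontinuity should come from showing that regularity forces $\omega=\omega^*$, and lower semicontinuity from exhibiting, below $\omega$, enough \emph{continuous} members of $\mathcal{U}(K,D,\psi,\delta)$. Your barrier $(\delta-\varepsilon)+M\rho$ at points of $\partial D$ is correct. For the u.s.c.\ half, though, the clean statement is that $(m,\psi,\delta)$-regularity makes $\omega^*$ itself an element of $\mathcal{U}(K,D,\psi,\delta)$ (its restriction to $K$ equals $\psi$, and $\omega^*<\delta$ in $D$ by the maximum principle since $\psi<\delta$ on $K$), whence $\omega=\omega^*$ on \emph{all} of $D$; you only establish the identity on $K$, which does not by itself give upper semicontinuity of $\omega$ in $D$.

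The genuine gap is in your lower-semicontinuity step at points of $K$ (and, unaddressed entirely, at points of $D\setminus K$, where l.s.c.\ is not automatic either). A Choquet-type selection extracts an increasing sequence from the family $\mathcal{U}(K,D,\psi,\delta)$, whose members are merely upper semicontinuous; it does not produce continuous subsolutions, and its supremum agrees with $\omega^*$ only outside a negligible set --- useless for l.s.c.\ at a \emph{prescribed} point $z_0$, which may lie in that set. Your appeal to Dini's theorem on $\bar D$ is circular: Dini requires the pointwise limit to be continuous, which is the conclusion you are after. The device that actually closes the argument is the opposite regularization: mollify $\omega=\omega^*$ from above to get continuous $m$-subharmonic $\omega_j\downarrow\omega$ on exhausting subdomains $D_j$; apply Dini only on $K$, where the decreasing limit is the \emph{known-continuous} function $\psi$, to get $\varepsilon_j\downarrow 0$ with $\omega_j-\varepsilon_j\le\psi$ on $K$; then glue $\omega_j-\varepsilon_j-\eta_j$ with the boundary barrier $\delta+M_j\rho$ by the $\max$ operation (they match near $\partial D_j$ because $\omega\to\delta$ at $\partial D$ and $\rho\to 0$) to obtain a continuous element $v_j\in\mathcal{U}(K,D,\psi,\delta)$ with $v_j\ge\omega-\varepsilon_j-\eta_j$ on $D_j$. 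This handles every interior point at once and makes your local barrier at $z_0\in K$, and the ``conflict'' you describe there, unnecessary. As written, the central construction of your proof is not carried out and the tools you name would not carry it out.
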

\subsection{The maximality of the weighted $m-$subharmonic measure.}
Let $D$ be a bounded domain in $\mathbb{C}^n$.
\begin{definition}
A function $u \in sh_m(D)$ is called \textit{maximal} in the domain $D$ if it satisfies the dominance principle within the class of $m-$subharmonic functions, that is, if  
\[
\forall v \in sh_m(D): \quad \mathop{\underline{\lim}}_{z \to \partial D} \left( u(z) - v(z) \right) \ge 0,
\]
then for all $z \in D$, the inequality $u(z) \ge v(z)$ holds.
\end{definition}
We will prove that the $(m, \psi, \delta)-$subharmonic measure is maximal in $D \setminus K $, which corresponds to the first part of Theorem~\ref{zero=polar}.
\begin{proposition}\label{m-th}
Let $K \subset D$ be an $(m, \psi, \delta)-$regular compact set and let $\psi(z)$ be continuous on the compact set $K$. Then, the $\mathcal{P}_{(m, \psi, \delta)}-$measure is maximal in the open set $D \setminus K$, i.e.,
\[
(dd^c \, \omega_m^*(z, K, D, \psi, \delta))^{n - m + 1} \wedge \beta^{m - 1} = 0.
\]
\end{proposition}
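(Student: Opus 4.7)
The plan is the classical balayage argument adapted to the weighted setting. First I would use Theorem~\ref{con} to reduce to a continuous $\omega := \omega_m^*(\cdot,K,D,\psi,\delta)$, then locally replace it, on a small ball disjoint from $K$, by a solution of the homogeneous Hessian equation, glue the modification back, and exploit the extremal definition of $\omega$ to force an equality.

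\textbf{Setup.} By Theorem~\ref{con}, $\omega \in C(\bar D) \cap sh_m(D)$; $(m,\psi,\delta)$-regularity of $K$ gives $\omega|_K = \psi|_K$, and $\omega \to \delta$ at $\partial D$. The strict inequality $\omega(z) < \delta$ at every $z \in D$ follows from the hypothesis $\delta > \sup_K \psi$ combined with the maximum principle for $m$-subharmonic functions: $\omega$ equals $\psi < \delta$ on $K$, so it cannot be constant $\delta$ on the connected component containing $K$. This strict inequality is what allows the competitor class $\mathcal U(K,D,\psi,\delta)$ to survive the local modification below.

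\textbf{Local modification and gluing.} Fix $z_0 \in D \setminus K$ and choose a ball $B \subset\subset D \setminus K$ containing $z_0$. Let $v$ be the Perron--Bremermann envelope on $B$ with boundary datum $\omega|_{\partial B}$,
\[
v(z) := \sup\bigl\{ w(z) : w \in sh_m(B),\ \limsup_{\zeta \to \xi} w(\zeta) \le \omega(\xi) \text{ for all } \xi \in \partial B \bigr\}.
\]
By the standard theory of the complex Hessian Dirichlet problem on the ball (cf.~\cite{ASBA}, \cite{ZB}), the upper semicontinuous regularization $v^*$ lies in $sh_m(B) \cap C(\bar B)$, equals $\omega$ on $\partial B$, and satisfies $(dd^c v^*)^{n-m+1} \wedge \beta^{m-1} = 0$ on $B$. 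Since $\omega|_B$ itself is admissible in the envelope, $v^* \ge \omega$ on $B$; and by the maximum principle, $v^* \le \max_{\partial B} \omega < \delta$ on $\bar B$. Set
\[
\tilde v(z) = v^*(z) \ \text{for } z \in B, \qquad \tilde v(z) = \omega(z) \ \text{for } z \in D \setminus B.
\]
The standard gluing lemma for $m$-subharmonic functions (applicable because $v^* = \omega$ on $\partial B$ and $v^* \ge \omega$ on $B$) ensures $\tilde v \in sh_m(D)$.

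\textbf{Conclusion.} Since $K \subset D \setminus B$, $\tilde v|_K = \omega|_K \le \psi|_K$; and the bounds above give $\tilde v < \delta$ on all of $D$. Therefore $\tilde v \in \mathcal U(K,D,\psi,\delta)$, so by the extremal definition of $\omega$ we have $\tilde v \le \omega$ on $D$. In particular $v^* \le \omega$ on $B$; combined with $v^* \ge \omega$ this forces $v^* = \omega$ on $B$, and consequently $(dd^c\omega)^{n-m+1}\wedge \beta^{m-1} = 0$ on $B$. As $z_0 \in D \setminus K$ was arbitrary, the vanishing holds throughout $D\setminus K$.

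\textbf{Main obstacle.} The step I expect to require the most care is the verification that $\tilde v$ still belongs to $\mathcal U(K,D,\psi,\delta)$; the strict bound $\tilde v < \delta$ depends crucially on the strict bound $\omega < \delta$ on $D$. In the unweighted case $\psi \equiv -1$, $\delta = 0$ this is automatic, but here it hinges precisely on the hypothesis $\delta > \sup_K \psi$ together with regularity of $K$. The remaining ingredients---solvability of the homogeneous Hessian Dirichlet problem on a ball with continuous boundary data, and the gluing principle for $m$-subharmonic functions---are by now standard tools in the $m$-subharmonic framework of \cite{ASBA} and \cite{ZB}.
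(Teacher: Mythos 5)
Your proposal is correct and follows essentially the same route as the paper: replace $\omega_m^*$ on a small ball $B\Subset D\setminus K$ by the Perron--Bremermann envelope solving the homogeneous Hessian equation (continuity coming from Theorem~\ref{con} and Blocki's solution of the Dirichlet problem), glue, observe the glued function lies in $\mathcal U(K,D,\psi,\delta)$, and use the extremality of $\omega_m^*$ to conclude equality on $B$. The only difference is cosmetic: you explicitly verify the strict bound $\tilde v<\delta$ via the maximum principle, a point the paper's proof passes over silently.
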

\begin{proof}
According to Theorem \ref{con},  for any $z \in D$, we have
\[
\omega_m(z, K, D, \psi, \delta) \equiv \omega_m^*(z, K, D, \psi, \delta) \in C(\bar{D}).
\]
 We fix a ball $B \subset\subset D \setminus K$ and construct the following function
\[
v(z) = \sup \left\{ u(z) : u \in sh_m(B) \cap C(\bar{B}),\ u|_{\partial B} \le \omega^*(z, K, D, \psi, \delta)|_{\partial B} \right\}.
\]
Then, by \cite{ZB}, we have $v \in sh_m(B) \cap C(\bar{B})$ and it is maximal in the ball $B$, i.e.,
\[
(dd^c v)^{n - m + 1} \wedge \beta^{m - 1} = 0,
\]
and
\[
v(z) = \omega_m^*(z, K, D, \psi, \delta) \quad \text{for all } z \in \partial B.
\]
Since $v(z)$ is maximal in $B$, it follows that 
\[
v(z) \ge \omega_m^*(z, K, D, \psi, \delta),
\]
 for all $ z \in B$. Let us define the following function 
\[
w(z) =
\begin{cases}
\omega_m^*(z, K, D, \psi, \delta), & z \in D \setminus B, \\
v(z), & z \in B.
\end{cases}
\]
 It is not difficult to  show (see \cite[page 162]{ASBA}) that,  by its definition, we have
\[
w(z) \in sh_m(D) \cap C(D) \quad \text{and} \quad w(z) \in \mathcal{U}(K, D, \psi, \delta).
\]
As a result,
\[
w(z) \le \omega_m^*(z, K, D, \psi, \delta), \quad \forall z \in D.
\]
Consequently, for all $z\in B$, 

$$v(z) = \omega_m^*(z, K, D, \psi, \delta).$$
From the arbitrariness of the ball $B \subset D \setminus K$, we can conclude that
\[
(dd^c\, \omega_m^*(z, K, D, \psi, \delta))^{n - m + 1} \wedge \beta^{m - 1} = 0
\]
in the open set $D \setminus K$.
\end{proof}

\section{$(m, \psi, \delta)-$capacity}\label{W cap}
This section is devoted to the study of the $(m, \psi, \delta)-$capacity introduced in Section \ref{Intro} and to the proof of parts \ref{B} and \ref{C} of our main result. We recall the following definition, which will play a central role in what follows. Let $K$ be a compact subset of an $m-$regular domain $D \subset \mathbb{C}^n$ and let $\psi(z)$ be a function defined on $K$.  Fix also $\delta$ with $\delta> \sup\limits_{z\in K} \psi(z)$.
\begin{definition}\label{weighted m-cap}
 The following quantity
\[
C_m(K, D, \psi, \delta) = \inf \left\{ \int_D (dd^c u)^{n - m + 1} \wedge \beta^{m - 1} :
u \in sh_m(D) \cap C(D),\ u|_K \le \psi|_K,\,\ \mathop{\underline{\lim}}_{z \to \partial D} u(z) \ge \delta \right\}
\]
is called the \textit{$(m, \psi, \delta)-$capacity} of $K$ with respect to $D$.  
\end{definition}

Note that $C_m(K, D, -1, 0)$, where $\psi = -1$ and $\delta = 0$, coincides with the classical $m-$capacity of $K$ with respect to $D$ in potential theory within the class of $u \in sh_m(D)$, i.e,
\[
C_m(K, D, -1, 0) = C_m(K, D).
\]
The weighted $(m, \psi)-$capacity $C_m(K, D, \psi)$, in the case $\delta = 0$, was studied in our previous work \cite{KK}.  Throughout this paper, $\psi(z)$ is assumed to be continuous on $K$ and to satisfy $\psi|_K<\delta $ whenever the $(m, \psi, \delta)-$capacity $C_m(K, D, \psi, \delta)$ is considered. 
The $(m, \psi, \delta)-$capacity has the following properties.
\begin{proposition}\label{monotonicity}
The following monotonicity properties hold:
\begin{enumerate}[label=\textnormal{(\alph*)}]
\item if $K_1 \subset K_2 \subset D$, then
$C_m(K_1, D, \psi, \delta) \le C_m(K_2, D, \psi, \delta)$;
\item if $\psi_1 \le \psi_2$, then $C_m(K, D, \psi_1, \delta) \ge C_m(K, D, \psi_2, \delta)$;
\item if $\delta_1 \le \delta_2$, then $C_m(K, D, \psi, \delta_1) \le C_m(K, D, \psi, \delta_2).$
\end{enumerate}
\end{proposition}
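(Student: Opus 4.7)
The plan is to reduce each of (a), (b), (c) to a monotonicity statement about the class of admissible competitors in Definition \ref{weighted m-cap}. Writing
\[
\mathcal{A}(K, D, \psi, \delta) = \Bigl\{u \in sh_m(D) \cap C(D) : u|_K \le \psi|_K,\ \mathop{\underline{\lim}}_{z\to\partial D} u(z) \ge \delta\Bigr\},
\]
I will observe that varying one parameter while holding the others fixed either enlarges or shrinks $\mathcal{A}$, and then invoke the elementary fact that the infimum of the same nonnegative functional $u \mapsto \int_D (dd^c u)^{n-m+1} \wedge \beta^{m-1}$ over a smaller class is no smaller than the infimum over a larger class.

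For (a), if $u \in \mathcal{A}(K_2, D, \psi, \delta)$ and $K_1 \subset K_2$, then the constraint $u|_{K_2} \le \psi|_{K_2}$ trivially implies $u|_{K_1} \le \psi|_{K_1}$, while the boundary condition is unaffected. Hence $\mathcal{A}(K_2, D, \psi, \delta) \subset \mathcal{A}(K_1, D, \psi, \delta)$, which yields $C_m(K_1, D, \psi, \delta) \le C_m(K_2, D, \psi, \delta)$.

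For (b), if $\psi_1 \le \psi_2$ on $K$ and $u$ satisfies $u|_K \le \psi_1|_K$, then automatically $u|_K \le \psi_2|_K$, so $\mathcal{A}(K, D, \psi_1, \delta) \subset \mathcal{A}(K, D, \psi_2, \delta)$, and consequently $C_m(K, D, \psi_1, \delta) \ge C_m(K, D, \psi_2, \delta)$. For (c), if $\delta_1 \le \delta_2$ and $\mathop{\underline{\lim}}_{z \to \partial D} u(z) \ge \delta_2$, then a fortiori $\mathop{\underline{\lim}}_{z \to \partial D} u(z) \ge \delta_1$, so $\mathcal{A}(K, D, \psi, \delta_2) \subset \mathcal{A}(K, D, \psi, \delta_1)$, which gives $C_m(K, D, \psi, \delta_1) \le C_m(K, D, \psi, \delta_2)$.

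No step in this proof is a genuine obstacle: each part is a one-line inclusion of admissible classes followed by monotonicity of the infimum. The only thing requiring care is tracking the direction of the inequalities; in (b) the direction reverses because raising $\psi$ loosens the upper constraint on $u|_K$ and therefore enlarges $\mathcal{A}$, whereas in (a) and (c) the inclusion of admissible classes aligns with the parameter change in the opposite way.
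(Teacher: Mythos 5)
Your proof is correct and is exactly the argument the paper has in mind: the paper simply remarks that the monotonicity properties "follow easily from the definition," and your inclusion-of-admissible-classes argument, with the inequality directions tracked carefully (including the reversal in (b)), is that routine verification spelled out.
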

The proof of the monotonicity properties of the $(m, \psi, \delta)-$capacity $C_m(K, D, \psi, \delta)$ follows easily from its definition. We now prove the following proposition, which corresponds to part \ref{B} of Theorem \ref{zero=polar}.
\begin{proposition}\label{reg-com cap}
If $K \subset D$ is an $(m, \psi, \delta)-$regular compact set, then
\[
C_m(K, D, \psi, \delta) = \int_K (dd^c\, \omega_m^*(z, K, D, \psi, \delta))^{n - m + 1} \wedge \beta^{m - 1}.
\]
\end{proposition}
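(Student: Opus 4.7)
The plan is to prove the two inequalities $\leq$ and $\geq$ separately.

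For the $\leq$ direction: by Theorem \ref{con}, the $(m,\psi,\delta)$-regularity of $K$ together with continuity of $\psi$ gives $\omega_m^*(z,K,D,\psi,\delta)\in sh_m(D)\cap C(\bar D)$, with $\omega_m^*|_K=\psi|_K$ and $\omega_m^*(z)\to\delta$ as $z\to\partial D$ (the latter since $K\Subset D$). Thus $\omega_m^*$ is itself admissible in the infimum defining $C_m(K,D,\psi,\delta)$, and Proposition \ref{m-th} (vanishing of the Hessian measure on $D\setminus K$) lets me restrict the integral to $K$:
\[
C_m(K,D,\psi,\delta)\leq\int_D(dd^c\omega_m^*)^{n-m+1}\wedge\beta^{m-1}=\int_K(dd^c\omega_m^*)^{n-m+1}\wedge\beta^{m-1}.
\]

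For the $\geq$ direction, I would take an arbitrary admissible $u$ and, using the $m$-regularizer $\rho\in sh_m(D)$ of $D$ (so $\rho<0$ on $D$ and $\rho\to 0$ at $\partial D$), form the perturbation $u_\epsilon:=u+\epsilon\rho\in sh_m(D)\cap C(D)$. Because $\rho\leq -c$ on $K$ with $c:=-\max_K\rho>0$, one has $u_\epsilon|_K\leq\psi-\epsilon c$ strictly below $\psi$, while $\liminf_{z\to\partial D}u_\epsilon(z)=\liminf u(z)\geq\delta$ is preserved. Choose $0<\eta<\epsilon c$ and put $G:=\{z\in D:u_\epsilon(z)<\omega_m^*(z)-\eta\}$. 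Then $K\subset G$ (on $K$, $u_\epsilon\leq\psi-\epsilon c<\psi-\eta=\omega_m^*-\eta$) and $G\Subset D$ (near $\partial D$, $u_\epsilon\to\delta$ while $\omega_m^*-\eta\leq\delta-\eta$). The comparison principle \eqref{ICP} applied to $u_\epsilon$ and $\omega_m^*-\eta$ on $G$ (noting that $(dd^c(\omega_m^*-\eta))=(dd^c\omega_m^*)$), combined with Proposition \ref{m-th} in the form that $(dd^c\omega_m^*)^{n-m+1}\wedge\beta^{m-1}$ is supported on $K\subset G$, gives
\[
\int_D(dd^cu_\epsilon)^{n-m+1}\wedge\beta^{m-1}\geq\int_G(dd^c\omega_m^*)^{n-m+1}\wedge\beta^{m-1}=\int_K(dd^c\omega_m^*)^{n-m+1}\wedge\beta^{m-1}.
\]

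The main obstacle is the passage to the limit $\epsilon\downarrow 0$ on the left. Expanding
\[
(dd^cu_\epsilon)^{n-m+1}=\sum_{j=0}^{n-m+1}\tbinom{n-m+1}{j}\epsilon^j(dd^cu)^{n-m+1-j}\wedge(dd^c\rho)^j,
\]
each summand is a positive current by Blocki's inequality \eqref{BI}, and the Bedford--Taylor--Blocki weak continuity of the Hessian operator on continuous $m$-subharmonic functions (used together with an exhaustion $D=\bigcup D_k$ by $m$-regular subdomains to handle total masses) yields $\int_D(dd^cu_\epsilon)^{n-m+1}\wedge\beta^{m-1}\to\int_D(dd^cu)^{n-m+1}\wedge\beta^{m-1}$ as $\epsilon\to 0$. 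Hence $\int_D(dd^cu)^{n-m+1}\wedge\beta^{m-1}\geq\int_K(dd^c\omega_m^*)^{n-m+1}\wedge\beta^{m-1}$, and taking the infimum over admissible $u$ completes the proof. The delicate point is the simultaneous shrinking of $G=G_{\epsilon,\eta}$ (requiring $\eta<\epsilon c$) and weak continuity of the Hessian; the perturbation $u\mapsto u+\epsilon\rho$ is engineered precisely to bypass the potential coincidence set $\{u=\psi\}\cap K$ that would otherwise obstruct a direct comparison.
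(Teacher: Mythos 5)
Your first inequality ($C_m(K,D,\psi,\delta)\le\int_K(dd^c\omega_m^*)^{n-m+1}\wedge\beta^{m-1}$) is exactly the paper's argument: $\omega_m^*$ is admissible by Theorem \ref{con} and Proposition \ref{m-th} localizes its Hessian mass to $K$. The gap is in the reverse inequality. For each fixed $\varepsilon>0$ your comparison only yields $\int_D (dd^c(u+\varepsilon\rho))^{n-m+1}\wedge\beta^{m-1} \ge \int_K (dd^c\omega_m^*)^{n-m+1}\wedge\beta^{m-1}$, and since $u+\varepsilon\rho$ is not a competitor in the definition of $C_m(K,D,\psi,\delta)$, everything hinges on the limit passage $\varepsilon\downarrow 0$ --- and that step does not go through as written. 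First, the function $\rho$ of an $m$-regular domain is only assumed $m$-subharmonic, negative, with $\rho\to0$ at $\partial D$; it need not be continuous or even locally bounded, so neither $u+\varepsilon\rho\in sh_m(D)\cap C(D)$ nor the mixed currents $(dd^cu)^{n-m+1-j}\wedge(dd^c\rho)^j\wedge\beta^{m-1}$ in your binomial expansion are justified. Second, even granting local boundedness, the total masses of these mixed terms over all of $D$ need not be finite, so the excess $\sum_{j\ge1}\binom{n-m+1}{j}\varepsilon^j\int_D(dd^cu)^{n-m+1-j}\wedge(dd^c\rho)^j\wedge\beta^{m-1}$ need not tend to $0$; and weak convergence of Hessian measures gives only lower semicontinuity of mass on the open set $D$, i.e. an inequality in the wrong direction --- it cannot produce $\limsup_{\varepsilon\to0}\int_D(dd^cu_\varepsilon)^{n-m+1}\wedge\beta^{m-1}\le\int_D(dd^cu)^{n-m+1}\wedge\beta^{m-1}$. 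Nor can you localize the error to a fixed compact set: your set $G_{\varepsilon,\eta}$ requires $\eta<\varepsilon c$, and its separation from $\partial D$ is controlled only by $\eta$, so it does not stay inside a fixed compact subset as $\varepsilon\downarrow0$.

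The paper avoids all of this by perturbing $\omega_m^*$ rather than $u$: it compares $u$ with $\left(1-\frac{2\varepsilon}{\delta-\max_{z\in K}\psi(z)}\right)\omega_m^*(z,K,D,\psi,\delta)+\varepsilon\,\frac{\delta+\max_{z\in K}\psi(z)}{\delta-\max_{z\in K}\psi(z)}$ on the open set where $u$ is smaller. That set contains $K$ (because $\delta>\psi$ on $K$) and is relatively compact in $D$ (the competitor tends to $\delta-\varepsilon$ at $\partial D$ while $\mathop{\underline{\lim}}_{z\to\partial D}u\ge\delta$), so the comparison principle \eqref{ICP} together with Proposition \ref{m-th} gives $\int_D(dd^cu)^{n-m+1}\wedge\beta^{m-1}\ge\left(1-\frac{2\varepsilon}{\delta-\max_{z\in K}\psi(z)}\right)^{n-m+1}\int_K(dd^c\omega_m^*)^{n-m+1}\wedge\beta^{m-1}$ directly, with only a scalar factor tending to $1$ and no convergence of Hessian measures needed. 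If you want to salvage your scheme you would have to work with a bounded continuous exhaustion and additionally prove finiteness of the mixed masses on all of $D$; the multiplicative perturbation of $\omega_m^*$ is the cleaner route.
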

\begin{proof}
By Theorem \ref{con}, we have     
\[
\omega_m^*(z, K, D, \psi, \delta) \in sh_m(D) \cap C(D).
\]
Furthermore, according to Proposition \ref{m-th} and Definition \ref{weighted m-cap}, the following inequality holds
\[
\int_K (dd^c\, \omega_m^*(z, K, D, \psi, \delta))^{n - m + 1} \wedge \beta^{m - 1}
\ge C_m(K, D, \psi, \delta).
\]

On the other hand, fix an arbitrary $\varepsilon > 0$ satisfying the inequality
$0 < 2\varepsilon < \delta - \max\limits_{z \in K} \psi(z).$
Then, for any function $u \in sh_m(D) \cap C(D)$ such that
\[
u|_K \le \psi|_K \quad \text{and} \quad \mathop{\underline{\lim}}_{z \to \partial D} u(z) \ge \delta,
\]
the set
\[
F = \left\{ z \in D :\ 
u(z) < \left(1 - \frac{2\varepsilon}{\delta - \max\limits_{z \in K} \psi(z)}\right) \cdot \omega_m^*(z, K, D, \psi, \delta) 
+ \varepsilon \cdot \frac{\delta+\max\limits_{z \in K} \psi(z)}{\delta - \max\limits_{z \in K} \psi(z)} \right\}
\]
is open and it is easy to verify that  $K \subset F \Subset D$.  
Therefore, according to the comparison principle, we have
\[
\int_F (dd^c u)^{n - m + 1} \wedge \beta^{m - 1}
\ge
\left( 1 - \frac{2\varepsilon}{\delta - \max\limits_{z \in K} \psi(z)} \right)^{n - m + 1}
\cdot
\int_F (dd^c\, \omega_m^*(z, K, D, \psi, \delta))^{n - m + 1} \wedge \beta^{m - 1}.
\]
By Proposition \ref{m-th} and since \( K \subset F \), we obtain 
\[
\left( 1 - \frac{2\varepsilon}{\delta - \max\limits_{z \in K} \psi(z)} \right)^{n - m + 1}
\cdot \int_K (dd^c\, \omega_m^*(z, K, D, \psi, \delta))^{n - m + 1} \wedge \beta^{m - 1}
\]
\[
\le \int_F (dd^c\, u)^{n - m + 1} \wedge \beta^{m - 1}
\le \int_D (dd^c\, u)^{n - m + 1} \wedge \beta^{m - 1}.
\]
The arbitrariness of \( \varepsilon > 0 \) implies that
\[
\int_K (dd^c\, \omega_m^*(z, K, D, \psi, \delta))^{n - m + 1} \wedge \beta^{m - 1}
\le C_m(K, D, \psi, \delta).
\]
\end{proof}
\begin{proposition}\label{com-cap}
For any compact $K \subset D,$
\[
C_m(K, D, \psi, \delta) = \inf \left\{ C_m(E, D, \tilde{\psi}, \delta) :\ E \supset K \right\},
\]
where $\tilde{\psi} \in C(E)$, $\tilde{\psi}|_K = \psi|_K$ and $E$ is an $(m, \tilde{\psi}, \delta)-$regular compact set in the domain $D$.
\end{proposition}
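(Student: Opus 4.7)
I will prove the equality by establishing both inequalities.

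For the easy direction $C_m(K, D, \psi, \delta) \le \inf\{C_m(E, D, \tilde\psi, \delta)\}$, fix any admissible pair $(E, \tilde\psi)$ in the statement. If $u \in sh_m(D)\cap C(D)$ is a test function for $C_m(E, D, \tilde\psi, \delta)$, i.e.\ $u|_E \le \tilde\psi|_E$ and $\underline{\lim}_{z\to\partial D} u(z) \ge \delta$, then $K \subset E$ and $\tilde\psi|_K = \psi|_K$ give $u|_K \le \psi|_K$, so $u$ is admissible for $C_m(K, D, \psi, \delta)$. Since the integrals are identical, $C_m(K, D, \psi, \delta) \le C_m(E, D, \tilde\psi, \delta)$, and taking the infimum over $(E,\tilde\psi)$ yields the inequality.

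For the reverse direction, fix $\varepsilon > 0$ and pick a test function $u \in sh_m(D)\cap C(D)$ for $C_m(K, D, \psi, \delta)$ with $\int_D (dd^c u)^{n-m+1}\wedge\beta^{m-1} \le C_m(K, D, \psi, \delta) + \varepsilon$. The plan is to construct an $(m,\tilde\psi,\delta)$-regular pair $(E,\tilde\psi)$ with $E \supset K$ and $\tilde\psi|_K = \psi|_K$ for which $u$ is again admissible for $C_m(E, D, \tilde\psi, \delta)$; letting $\varepsilon \to 0$ will then finish the argument. Concretely I fix $t$ with $\max_K \psi < t < \delta$ and set $E := \{z \in D : u(z) \le t\}$. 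Since $\underline{\lim}_{z\to\partial D} u(z) \ge \delta > t$ and $u$ is continuous, $E$ is compact in $D$, and $K \subset E$ because $u|_K \le \max_K \psi < t$. I then extend $\psi$ from $K$ to a continuous function $\Psi$ on $\overline D$ via Tietze, normalized so that $\Psi \le \max_K \psi$, and define $\tilde\psi := \max(\Psi, u)\big|_E$. This function is continuous on $E$, satisfies $\tilde\psi|_K = \psi|_K$ (since $u \le \psi$ on $K$), dominates $u$ on $E$, and equals $t$ on $\partial E$. With these choices, $u$ is admissible for $C_m(E, D, \tilde\psi, \delta)$: $u|_E \le \tilde\psi|_E$ and $\underline{\lim}_{z\to\partial D} u(z) \ge \delta$, yielding $C_m(E, D, \tilde\psi, \delta) \le \int_D (dd^c u)^{n-m+1}\wedge\beta^{m-1} \le C_m(K, D, \psi, \delta)+\varepsilon$.

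The main obstacle is verifying that $E$ is genuinely $(m,\tilde\psi,\delta)$-regular. The upper bound $\omega_m^*(z_0, E, D, \tilde\psi, \delta) \le \tilde\psi(z_0)$ on $E$ is built into the definition; the work lies in proving the matching lower bound at every $z_0 \in E$. After a harmless modification ensuring $u < \delta$ on $D$, the function $u$ belongs to the defining class $\mathcal U(E, D, \tilde\psi, \delta)$ of the $\mathcal P_{(m,\tilde\psi,\delta)}$-measure, so $\omega_m^* \ge u$ throughout $D$. For $z_0 \in \partial E$ this gives $\omega_m^*(z_0) \ge u(z_0) = t = \tilde\psi(z_0)$, proving regularity on $\partial E$; the same argument handles interior points where $\tilde\psi = u$. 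The delicate case is an interior point $z_0 \in E$ where $\tilde\psi(z_0) = \Psi(z_0) > u(z_0)$, in particular at points of $K$ where $u(z_0) < \psi(z_0)$: here I must produce an $sh_m$-minorant of $\tilde\psi$ that peaks at $z_0$. This is the crux and likely demands refining the construction of $\Psi$, for instance by replacing the Tietze extension in a neighborhood of $K$ by an $sh_m$-approximation of $\psi$ (possible on a slight shrinking of $E$) or by performing a double approximation and exploiting the continuity of $\tilde\psi$ together with Theorem \ref{con}. Once regularity of $(E,\tilde\psi)$ is secured, the capacity bound obtained above with $\varepsilon \to 0$ delivers $\inf\{C_m(E, D, \tilde\psi, \delta)\} \le C_m(K, D, \psi, \delta)$, completing the proof.
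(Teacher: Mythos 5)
Your easy direction is fine, but the reverse direction has a genuine gap at exactly the step you flag as "the crux": you never prove that your pair $(E,\tilde\psi)=\bigl(\{u\le t\},\,\max(\Psi,u)|_E\bigr)$ is $(m,\tilde\psi,\delta)$-regular, and for this construction regularity can actually fail. First, your bookkeeping of what is automatic is backwards: the definition only gives $\omega_m(z,E,D,\tilde\psi,\delta)\le\tilde\psi(z)$ on $E$ for the \emph{unregularized} envelope; the upper semicontinuous regularization $\omega_m^*$ may jump above $\tilde\psi$ at points of $E$, and that failure is precisely irregularity. So at $z_0\in\partial E$ the inequality $\omega_m^*(z_0)\ge u(z_0)=t$ does not "prove regularity on $\partial E$"; you still need $\omega_m^*(z_0)\le t$, which fails if $E$ is thin there (e.g.\ $\{u\le t\}$ may have isolated or polar pieces away from $K$ where $u$ merely dips to $t$). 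Second, at an interior point $z_0\in E$ with $\tilde\psi(z_0)=\Psi(z_0)>u(z_0)$, a Tietze extension $\Psi$ gives no reason for any admissible $v\in sh_m(D)$, $v\le\tilde\psi$ on a full neighborhood of $z_0$, to reach the value $\Psi(z_0)$: if $\Psi$ has a strict local maximum at such a point, the sub-mean-value inequality forces $v(z_0)<\Psi(z_0)$ for every competitor, so $z_0$ is irregular and your $(E,\tilde\psi)$ is simply not admissible in the infimum. Your own text concedes this ("likely demands refining the construction \dots once regularity is secured"), which means the proof is incomplete at its decisive step. A smaller but real issue: the "harmless modification ensuring $u<\delta$" is not harmless as stated, since $\min(u,\delta-\eta)$ is not $m$-subharmonic; one must rescale affinely instead.

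For comparison, the paper does not try to make $u$ exactly admissible for an enlarged pair. It extends $\psi$ continuously by Whitney's theorem, forms the open set $U=\{u<\tilde\psi+\varepsilon\}\supset K$, takes \emph{some} $(m,\tilde\psi,\delta)$-regular compact $E$ with $K\subset E\subset U$ (regular compacts exhausting neighborhoods are supplied by \cite{KKNN}, so no new regularity statement has to be proved), and then compares $u$ with the dilated measure $\omega_m^*(z,E,D,\tilde\psi,\delta)$ on an auxiliary set $F$ via the comparison principle \eqref{ICP} together with Proposition \ref{reg-com cap}, obtaining $C_m(E,D,\tilde\psi,\delta)\le\bigl(1-\tfrac{2\varepsilon}{\delta-\max_E\tilde\psi}\bigr)^{-(n-m+1)}\bigl(C_m(K,D,\psi,\delta)+\varepsilon\bigr)$. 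If you want to salvage your route, you would need to either quote such an existence result for regular compacts and then run a comparison-principle estimate (since $u$ is only $\varepsilon$-dominated by $\tilde\psi$ on $E$), or genuinely prove regularity of a carefully modified $(E,\tilde\psi)$, which is a nontrivial barrier-type argument you have not supplied.
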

\begin{proof}
From the definition of $C_m(K, D, \psi, \delta)$, for any $\varepsilon > 0$, there exists a function $u \in sh_m(D) \cap C(D)$ such that
$$u|_K \le \psi|_K \quad \text{and} \quad \mathop{\underline{\lim}}_{z \to \partial D} u(z) \ge \delta$$ and the following inequality holds
\[
\int_D (dd^c u)^{n - m + 1} \wedge \beta^{m - 1} - C_m(K, D, \psi, \delta) < \varepsilon.
\]
 Since the function $\psi(z)$ is continuous on the compact set $K$, according to Whitney's theorem \cite{WH}, there exists some continuous function $\tilde{\psi}(z)$ in $D$ such that $\tilde{\psi}|_K = \psi|_K$. Then the set 
\[
U = \{ z \in D : u(z) < \tilde{\psi}(z) + \varepsilon \}
\]
is open, contains the compact set $K$. 
Let us consider the open set
\[
F = \left\{ z \in D : u(z) < \left( 1 - \frac{2\varepsilon}{\delta - \max\limits_{z \in E} \tilde{\psi}(z)} \right) \cdot \omega_m^*(z, E, D, \tilde{\psi}, \delta) + \varepsilon \cdot \frac{\delta+\max\limits_{z \in E} \psi(z)}{\delta - \max\limits_{z \in E} \psi(z)} \right\},
\]
where $E$ is an $(m, \tilde{\psi}, \delta)-$regular compact set such that $K \subset E \subset U$ and   $0 < 2\varepsilon < \delta - \max\limits_{z \in E} \tilde{\psi}(z)$.
It is not difficult to check that $E \subset F \subset\subset D.$  
By Proposition \ref{reg-com cap} and the comparison principle, we have
\begin{align*}
 C_m(E, D, \tilde{\psi}, \delta)&
= \int_E (dd^c \omega_m^*(z, E, D, \tilde{\psi}, \delta))^{n - m + 1} \wedge \beta^{m - 1}\\
&= \int_F (dd^c \omega_m^*(z, E, D, \tilde{\psi}, \delta))^{n - m + 1} \wedge \beta^{m - 1}\\
&\le \frac{1}{\left( 1 - \frac{2\varepsilon}{\delta - \max\limits_{z \in E} \tilde{\psi}(z)} \right)^{n - m + 1}} \cdot
\int_F (dd^c u)^{n - m + 1} \wedge \beta^{m - 1}\\
&\le \frac{1}{\left( 1 - \frac{2\varepsilon}{\delta - \max\limits_{z \in E} \tilde{\psi}(z)} \right)^{n - m + 1}}
\int_D (dd^c u)^{n - m + 1} \wedge \beta^{m - 1}\\
&
< \frac{1}{\left( 1 - \frac{2\varepsilon}{\delta - \max\limits_{z \in E} \tilde{\psi}(z)} \right)^{n - m + 1}}
\left( C_m(K, D, \psi, \delta) + \varepsilon \right).
\end{align*}
Thus,
\[C_m(K, D, \psi, \delta) \le C_m(E, D, \tilde{\psi}, \delta)
< \frac{1}{\left( 1 - \frac{2\varepsilon}{\delta - \max\limits_{z \in E} \tilde{\psi}(z)} \right)^{n - m + 1}}
\left( C_m(K, D, \psi, \delta) + \varepsilon \right).\]
The arbitrariness of $ \varepsilon > 0 $ implies that 
\[C_m(K, D, \psi, \delta) = \inf \left\{ C_m(E, D, \tilde{\psi}, \delta) : E \supset K \right\}.\]
\end{proof}

\begin{proposition}\label{ext-cap-com-eq}
For any compact set $K \subset D$, the following equality holds
\[
C_{m}^{*}(K,D,\psi,\delta) = C_{m}(K,D,\psi,\delta).
\]    
\end{proposition}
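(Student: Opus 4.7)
The inequality $C_m^*(K, D, \psi, \delta) \ge C_m(K, D, \psi, \delta)$ is essentially immediate: for every open $U \supset K$, the compact $K$ itself is admissible in the supremum defining $C_m(U, D, \psi, \delta)$, so $C_m(U, D, \psi, \delta) \ge C_m(K, D, \psi, \delta)$, and taking the infimum over $U$ preserves the bound. The substantive content is the reverse inequality.

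For that, I plan to fix $\varepsilon > 0$ small enough that $\delta - M_0 - 2\varepsilon > 0$, where $M_0 := \max\limits_{z\in K}\psi(z)$, and to pick $u \in sh_m(D) \cap C(D)$ admissible for $C_m(K, D, \psi, \delta)$ whose Hessian integral is less than $C_m(K, D, \psi, \delta) + \varepsilon$. Following the proof of Proposition \ref{com-cap}, I will extend $\psi$ via Whitney's theorem to a continuous function $\tilde\psi$ on $D$ and define the open neighborhood of $K$
\[
U := \{z \in D : u(z) < \tilde\psi(z) + \varepsilon\} \cap \{z \in D : \tilde\psi(z) < M_0 + \varepsilon\}.
\]
The role of the second factor is to guarantee that every compact $K' \subset U$ satisfies $M := \max\limits_{z \in K'}\tilde\psi(z) \le M_0 + \varepsilon$, uniformly in $K'$.

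The key step is a scaling construction. For any compact $K' \subset U$, set $\alpha := (\delta - M)/(\delta - M - \varepsilon) > 1$ and consider the test function $\tilde u := \alpha u + (1 - \alpha)\delta \in sh_m(D) \cap C(D)$. The boundary condition $\mathop{\underline{\lim}}_{z \to \partial D}\tilde u(z) \ge \alpha \delta + (1-\alpha)\delta = \delta$ is automatic, and the choice of $\alpha$ is arranged precisely so that on $K'$ the required inequality $\alpha u + (1-\alpha)\delta \le \tilde\psi$ reduces algebraically to $\tilde\psi \le M$, which holds by the definition of $M$. Hence $\tilde u$ is admissible for $C_m(K', D, \tilde\psi, \delta)$, and because adding a constant leaves $dd^c$ unchanged,
\[
C_m(K', D, \tilde\psi, \delta) \le \alpha^{n-m+1} \int_D (dd^c u)^{n-m+1}\wedge \beta^{m-1} \le \alpha^{n-m+1}\bigl(C_m(K, D, \psi, \delta) + \varepsilon\bigr).
\]
The uniform estimate $\alpha \le \alpha_0 := (\delta - M_0 - \varepsilon)/(\delta - M_0 - 2\varepsilon)$ for $K' \subset U$ will then permit me to take the supremum over $K'$, then the infimum over $U$, and finally let $\varepsilon \to 0$ (so $\alpha_0 \to 1$) to conclude $C_m^*(K, D, \psi, \delta) \le C_m(K, D, \psi, \delta)$.

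The main obstacle is arranging the scaling factor $\alpha$ to be uniformly controlled across all compact subsets of $U$, which is why I refine the natural open set $\{u < \tilde\psi + \varepsilon\}$ appearing in Proposition \ref{com-cap} by intersecting with $\{\tilde\psi < M_0 + \varepsilon\}$. Without such a refinement, $\max_{K'}\tilde\psi$ could approach $\delta$ and the scaling factor $\alpha$ would blow up, producing no useful bound. Once uniform control is in place, the rest is a direct application of the affine-rescaling behaviour of the Hessian current.
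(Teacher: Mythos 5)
Your proof is correct, and it takes a genuinely different route from the paper's. The paper argues in two stages: for an $(m,\psi,\delta)$-regular compact it works with the extremal function $\omega_m^*(z,K,D,\psi,\delta)$, the comparison principle, the maximality statement of Proposition \ref{m-th} and the representation of the capacity in Proposition \ref{reg-com cap}; for a non-regular compact it then reduces to that case via monotonicity and Proposition \ref{com-cap}. Your argument bypasses all of that machinery: the affine rescaling $\tilde u=\alpha u+(1-\alpha)\delta$ with $\alpha=(\delta-M)/(\delta-M-\varepsilon)$ is sound (admissibility on $K'$ does reduce exactly to $\tilde\psi\le M$, and $\delta-M-\varepsilon>\delta-M_0-2\varepsilon>0$ on your refined neighbourhood, so $\alpha<\alpha_0$ uniformly), it applies to \emph{every} compact $K'\subset U$ rather than only to regular ones, and it needs neither the continuity theorem, nor maximality, nor any regularity hypothesis on $K$. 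This is actually tighter at the one point where the paper is loose: the supremum defining $C_m(U,D,\psi,\delta)$ runs over all compact subsets of $U$, whereas the paper only estimates $(m,\tilde\psi,\delta)$-regular compacts $F\subset U$ and implicitly relies on approximation by regular compacts; your uniform bound covers all $K'$ directly. What the paper's longer route buys is the identification of the capacity of regular compacts with the total mass of $(dd^c\omega_m^*)^{n-m+1}\wedge\beta^{m-1}$, which is then reused in the proof of part \ref{C} of Theorem \ref{zero=polar}. Two caveats that affect your write-up and the paper equally: for compacts $K'\not\subset K$ the quantity $C_m(K',D,\psi,\delta)$ only makes sense through a fixed continuous extension of $\psi$ (you, like the paper, take a Whitney extension $\tilde\psi$, so the external capacity is being interpreted with that extension), and the hard inequality tacitly assumes the admissible class for $C_m(K,D,\psi,\delta)$ is nonempty with finite infimum, the inequality being vacuous otherwise.
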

\begin{proof}
 From the definition of the $(m,\psi,\delta)-$external capacity, it follows that
\[
C_{m}^{*}(K,D,\psi,\delta)\ge C_{m}(K,D,\psi,\delta).
\]
 On the other hand, if $K$ is an $(m,\psi,\delta)-$regular compact set, then for a sufficiently small fixed $\varepsilon > 0$, we construct the open set
\[
U = \{ z \in D : \omega_{m}^{*}(z,K,D,\psi,\delta) < \tilde{\psi}(z) + \varepsilon^2 \},
\]
where $\tilde{\psi}(z) \in C(D)$ satisfies $\tilde{\psi}|_{K} = \psi|_{K}$ and $\tilde{\psi}(z) + 2\varepsilon < \delta$ for any $z \in D$.
 It is easy to verify that $K \subset U$. In fact, for any $(m,\tilde{\psi},\delta)-$regular compact set $F$ such that $K \subset F \subset U$ and for any $z \in D$, the set
\[
G = \left\{ z \in D : \frac{\omega_{m}^{*}(z,K,D,\psi,\delta) - \delta}{1 - \varepsilon} + \varepsilon^2 < \omega_{m}^{*}(z,F,D,\tilde{\psi},\delta) - \delta \right\}
\]
is open and satisfies $F \subset G \Subset D$.

 Thus,
\[
\frac{1}{(1 - \varepsilon)^{n-m+1}} \int_{G} (dd^c \omega_m^{*}(z,K,D,\psi,\delta))^{n-m+1}\wedge \beta^{m-1}
\ge \int_{G}(dd^c \omega_m^{*}(z,F,D,\psi,\delta))^{n-m+1}\wedge \beta^{m-1}.
\]
By Proposition \ref{m-th}, we have the inequality
\[
\frac{1}{(1 - \varepsilon)^{n-m+1}} \int_{K}(dd^c \omega_m^{*}(z,K,D,\psi,\delta))^{n-m+1}\wedge \beta^{m-1}
\ge \int_{F}(dd^c \omega_m^{*}(z,F,D,\tilde{\psi},\delta))^{n-m+1}\wedge \beta^{m-1}.
\]
Therefore,
\[
C_{m}^{*}(K,D,\psi,\delta)\le C_{m}(U,D,\psi,\delta)
\le \frac{1}{(1 - \varepsilon)^{n-m+1}}\int_{K}(dd^c \omega_m^{*}(z,K,D,\psi,\delta))^{n-m+1}\wedge \beta^{m-1}.
\]
Since $\varepsilon>0$ is arbitrary and by Proposition \ref{reg-com cap}, we obtain
\[
C_{m}^{*}(K,D,\psi,\delta)\le C_{m}(K,D,\psi,\delta).
\]
Now let $K\subset D$ be  compact but not  $(m,\psi,\delta)-$ regular. Then, by monotonicity, for any $(m,\tilde{\psi},\delta)-$regular compact set $E\supset K$, where $\tilde{\psi} \in C(D)$ such that $\tilde{\psi}|_K=\psi|_K$, we have
\[
C_{m}^{*}(K,D,\psi,\delta)\le C_{m}^{*}(E,D,\tilde{\psi},\delta)= C_{m}(E,D,\tilde{\psi},\delta).
\]
Consequently, since $E$ is an arbitrary $(m,\tilde{\psi},\delta)-$regular compact set and by Proposition \ref{com-cap}, it follows that
\[
C_{m}^{*}(K,D,\psi,\delta)\le C_{m}(K,D,\psi,\delta).
\]
\end{proof}
The conclusion of Proposition \ref{ext-cap-com-eq} is one of the essential steps in the proof of part \ref{C} of our main result and we are now ready to prove it.

\begin{proof}[{End of the proof of Theorem \ref{zero=polar}}]
In \cite{ASBA}, it is proved that the unweighted $m-$external capacity $ C_{m}^{*}(E, D) = 0$  if and only if $E$ is $m-$polar. Thus, to prove part \ref{C} of Theorem \ref{zero=polar}, it suffices to show that for any $(m, \psi, \delta) -$regular compact set $E$, there exist positive constants  $C_1$  and $C_2$ such that  
\[
C_1 \cdot C_{m}^{*}(E, D)
\leq C_{m}^{*}(E, D, \psi, \delta)
\leq C_2 \cdot C_{m}^{*}(E, D).
\]

Let us assume that $E$ is an $(m, \psi, \delta)-$regular compact set. Since $D \subset \mathbb{C}^{n}$ is an $m-$regular domain, there exists a function $\rho \in sh_m(D)$ such that $\rho|_D < 0$ and $\lim\limits_{z \to \partial D} \rho(z) = 0$. According to Proposition 1.3 in \cite{KKNN}, for any $z \in D$, we have
\[
\left( \delta - \min_{z \in E} \psi(z) \right) \cdot \omega_m^{*}(z, E, D) + \delta + \rho(z)
< \omega_m^{*}(z, E, D, \psi, \delta).
\]
Fix an arbitrary $\varepsilon > 0$ such that
$\varepsilon < -\max\limits_{z \in E} \rho(z)$
and define the open set
\[
U = \left\{ z \in D : \left( \delta - \min_{z \in E} \psi(z) \right) \omega_m^{*}(z, E, D) + \delta + \varepsilon \rho(z) + \varepsilon^2 < \omega_m^{*}(z, E, D, \psi, \delta) \right\}.
\]
It is easy to verify that $ E \Subset U \Subset D$. According to the comparison principle, the following inequality holds
\[
 \int_U (dd^c \omega_m^{*}(z, E, D, \psi, \delta))^{n - m + 1} \wedge \beta^{m - 1}
\le \int_U \left[ dd^c\left( \left( \delta - \min_{z \in E} \psi(z) \right) \omega_m^{*}(z, E, D) + \varepsilon \rho(z) \right) \right]^{n - m + 1} \wedge \beta^{m - 1} 
\]

It is known that the integral
\[
\int_U \left[ dd^c\left( \left( \delta - \min_{z \in E} \psi(z) \right) \omega_m^{*}(z, E, D) + \varepsilon \rho(z) \right) \right]^{n - m + 1} \wedge \beta^{m - 1}
\]
can be expressed as
\[
\left( \delta - \min_{z \in E} \psi(z) \right)^{n - m + 1} \int_U (dd^c \omega_m^{*}(z, E, D))^{n - m + 1} \wedge \beta^{m - 1} + \varepsilon C,
\]
where $C$ is a positive constant. According to Proposition \ref{m-th},
\[
\int_{E} (dd^c \omega_m^{*}(z, E, D, \psi, \delta))^{n - m + 1} \wedge \beta^{m - 1}\le\left( \delta - \min_{z \in E} \psi(z) \right)^{n - m + 1} \int_{E} (dd^c \omega_m^{*}(z, E, D))^{n - m + 1} \wedge \beta^{m - 1} + \varepsilon C
\]
It is known that 
$$C_{m}^{*}(K, D)=\int_{K} (dd^c \omega_m^{*}(z, K, D))^{n - m + 1} \wedge \beta^{m - 1}$$ for any compact $K \subset D$ (see \cite{AS}). Consequently,
by Proposition \ref{ext-cap-com-eq} and from the arbitrariness of $\varepsilon > 0$, we have
\[
C_{m}^{*}(E, D, \psi, \delta) \le \left( \delta - \min_{z \in E} \psi(z) \right)^{n - m + 1} C_{m}^{*}(E, D).
\]

On the other hand, applying Proposition 1.3 from \cite{KKNN} again, we obtain that for any \( z \in D \),
\[
\omega_m^{*}(z, E, D, \psi, \delta) + \rho(z)
< \left( \delta - \max_{z \in E} \psi(z) \right) \cdot \omega_m^{*}(z, E, D)+\delta.
\]
Fix an arbitrary $\varepsilon > 0$ such that
$\varepsilon < -\max\limits_{z \in E} \rho(z)$
and we define the set
\[
F = \left\{ z \in D : \omega_m^{*}(z, E, D, \psi, \delta) + \varepsilon \rho(z) + \varepsilon^2 
< \left( \delta - \max_{z \in E} \psi(z) \right) \cdot \omega_m^{*}(z, E, D)+\delta \right\}.
\]
It is easy to check that $ E \subset F \Subset D$ and  by the comparison principle, we get 
\[
\int_F [ (\delta - \max_{z \in E} \psi(z) )dd^c(\omega_m^{*}(z, E, D))]^{n - m + 1} \wedge \beta^{m - 1} 
\le \int_F \left[dd^c (\omega_m^{*}(z, E, D, \psi, \delta)+\varepsilon \rho(z))\right]^{n - m + 1} \wedge \beta^{m - 1}.
\]
Using the above technique again and since 
$\varepsilon>0$ is arbitrary, we obtain the following inequality 
$$ [\delta - \max_{z \in E} \psi(z)]^{n - m + 1} \, C_{m}^{*}(E, D)\le C_{m}^{*}(E, D, \psi, \delta).$$   
\end{proof}

\textbf{Acknowledgments.}
  The author is grateful to the late Professor Azimbay Sadullaev and Professor Karim Rakhimov for their insightful comments and valuable guidance during this work. This work is dedicated to the memory of Professor Sadullaev.

\end{document}